\newtheorem{theorem}{Theorem}[section]
\newtheorem{cor}[theorem]{Corollary}
\newtheorem{lemma}[theorem]{Lemma}
\newtheorem{conj}[theorem]{Conjecture}
\newtheorem{lem}[theorem]{Lemma}
\newtheorem{rem}[theorem]{Remark}
\newtheorem*{Definition*}{Definition}
\def\qed{\hfill \ifhmode\unskip\nobreak\fi\quad\ifmmode\Box\else$\Box$\fi\\ }
\begin{document}

\title[Almost complex $S^1$-manifolds with isolated fixed points]{Circle actions on almost complex manifolds with isolated fixed points}
\author{Donghoon Jang}
\address{School of Mathematics, Korea Institute for Advanced Study, 85 Hoegiro, Dongdaemun-gu, Seoul, 02455, Korea}
\email{groupaction@kias.re.kr}
\begin{abstract}
In \cite{J1}, the author proves that if the circle acts symplectically on a compact, connected symplectic manifold $M$ with three fixed points, then $M$ is equivariantly symplectomorphic to some standard action on $\mathbb{CP}^2$. In this paper, we extend the result to a circle action on an almost complex manifold; if the circle acts on a compact, connected almost complex manifold $M$ with exactly three fixed points, then $\dim M=4$. Moreover, the weights at the fixed points agree with those of a standard circle action on the complex projective plane $\mathbb{CP}^2$. Also, we deal with the cases of one fixed point and two fixed points.
\end{abstract}
\maketitle
\noindent 2010 MSC: 37B05, 47H10, 58C30, 58J20

\noindent Keywords: circle action, almost complex manifold, fixed point, weight

\section{Introduction}

The purpose of this paper is to classify circle actions on compact almost complex manifolds with few fixed points. In \cite{J1}, the author classifies a symplectic circle action on a compact symplectic manifold with three fixed points:

\begin{theorem} \label{t11} \cite{J1} Let the circle act symplectically on a compact, connected symplectic manifold $M$. If there are exactly three fixed points, then $M$ is equivariantly symplectomorphic to some standard action on $\mathbb{CP}^2$ and the weights at the fixed points are $\{a+b,a\},\{-a,b\}$, and $\{-b,-a-b\}$ for some positive integers $a$ and $b$. \end{theorem}

In particular, the manifold has to be four-dimensional and the action must be Hamiltonian. In this paper, we extend the result to a circle action on an almost complex manifold\footnote{Throughout the paper, we assume the action preserves the almost complex structure.}. The main result of this paper is the classification of a circle action on a compact almost complex manifold with at most three fixed points.

\begin{theorem} \label{t12} Let the circle act on a compact, connected almost complex manifold $M$. 
\begin{enumerate}
\item If there is exactly one fixed point, then $M$ is a point. 
\item If there are exactly two fixed points, then either $\dim M=2$ or $\dim M=6$. If $\dim M=2$, $M$ is the 2-sphere and the weights at the fixed points are $\{a\}$ and $\{-a\}$ for some some positive integer $a$. If $\dim M=6$, then the weights at the fixed points are $\{-a-b,a,b\}$ and $\{-a,-b,a+b\}$ for some positive integers $a$ and $b$. 
\item  If there are exactly three fixed points, then $\dim M=4$. Moreover, the weights at the fixed points are $\{a+b,a\},\{-a,b\}$, and $\{-b,-a-b\}$ for some positive integers $a$ and $b$.
\end{enumerate}
\end{theorem}
Theorem \ref{t12} will follow immediately from Theorem \ref{t26} and Theorem \ref{t29}.

We compare circle actions with few fixed points on almost complex manifolds, symplectic manifolds, and complex manifolds by providing a table. For this, assume that if the circle acts on an almost complex (symplectic, and complex) manifold $M$, then the action preserves the almost complex (symplectic, and complex, respectively) structure. 

Since any symplectic or complex manifold is almost complex, Theorem \ref{t12} implies the same results on the dimension and the weights at the fixed points for any symplectic or complex manifold as for an almost complex manifold. Moreover, since the weights at the fixed points determine the Chern (and Pontryagin) numbers and the Hirzebruch $\chi_y$-genus, those invariants are the same for the three types of manifolds. On the other hand, on the existence whether such a manifold exists or not and on the uniqueness if we can determine such a manifold up to diffeomorphism (symplectomorphism or biholomorphism, respectively), the answers depend on the type of the manifold. In Table \ref{t1}, any manifold is compact and connected, any circle action on the manifold preserves the given structure, and dim denotes the dimension of the manifold $M$. To the author's best knowledge, the classification is as in Table \ref{t1}. The more there are fixed points, the harder the classification problem is; for the classification of an almost complex $S^1$-manifold with four fixed points in low dimensions, see \cite{J3}.

\begin{table}
\begin{center}
\begin{tabular}{|p{0.1cm}|p{0.9cm}|p{1cm}|p{2cm}|p{2cm}|p{2cm}|}
\hline
\multicolumn{3}{|c|}{number of fixed points} & \multicolumn{3}{|c|}{type of manifold} \\\cline{4-6}
\multicolumn{3}{|c|}{} & almost complex & symplectic & complex \\\hline
\multicolumn{3}{|c|}{1} & \multicolumn{3}{|c|}{$M=\{\textrm{pt}\}$} \\\hline
\multirow{4}{*}{2} & \multicolumn{2}{|c|}{2 dim} & \multicolumn{3}{|c|}{$M=S^2$} \\\cline{2-6}
{} & \multirow{3}{*}{6 dim} & invariants & \multicolumn{3}{|p{8cm}|}{Chern (and Pontryagin) numbers and the Hirzebruch $\chi_y$-genus are the same as $S^6$.} \\\cline{3-6}
{} & {} & existence & $S^6$ & not known & not known \\\cline{3-6}
{} & {} & uniqueness & \multicolumn{3}{|c|}{not known} \\\hline
\multirow{3}{*}{3} & \multirow{3}{*}{4 dim} & invariants & \multicolumn{3}{|p{8cm}|}{Chern (and Pontryagin) numbers and the Hirzebruch $\chi_y$-genus are the same as $\mathbb{CP}^2$.} \\\cline{3-6}
{} & {} & existence & \multicolumn{3}{|c|}{$\mathbb{CP}^2$} \\\cline{3-6}
{} & {} & uniqueness & not known & \multicolumn{1}{|p{2.5cm}|}{$M$ is equivariantly symplectomorphic to $\mathbb{CP}^2$.} & \multicolumn{1}{|p{2.5cm}|}{Possibly $M$ is biholomorphic to $\mathbb{CP}^2$ by \cite{CHK}.} \\\hline
\end{tabular}
\caption{The classification of $S^1$-manifolds with few fixed points.}
\label{t1}
\end{center}
\end{table}

Now, we discuss the proof of Theorem \ref{t12}. When there are one or two fixed points, then we give a complete proof in Section 2; see Theorem \ref{t26}. For the case of three fixed points (Theorem \ref{t29}), the idea of the proof is to adapt the proof of Theorem \ref{t11} in \cite{J1}, since basically the same proof applies. To prove Theorem \ref{t11}, in \cite{J1} the author uses the symplectic property in a number of places, and we carefully go through an eliminating this reliance. In particular, \cite{J1} adapts results for symplectic actions from other papers. We will have extensions of the results to almost complex $S^1$-manifolds in Section 2. If we extend to almost complex manifolds all the results that we need, then the same proof as in \cite{J1} goes through. Therefore, we conclude that a compact almost complex $S^1$-manifold $M$ with three fixed points must have $\dim M=4$ and the weights as described in Theorem \ref{t12}. We clarify this at the end of this paper.

For a circle action on an almost complex manifold, there is an interesting and important conjecture by Kosniowski on the relationship between the dimension of a manifold and the number of fixed points \cite{K2}. 

\begin{conj} \cite{K2}
Let the circle act on a $2n$-dimensional compact, connected almost complex manifold with $k$ fixed points. Then $n \leq f(k)$, where $f(k)$ is a linear function in $k$.
\end{conj}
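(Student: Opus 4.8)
The plan is to reduce the conjecture to a combinatorial statement about the local weight data and then to attack that statement with equivariant rigidity. At each isolated fixed point $p$ the tangent space splits into $n$ complex one-dimensional $S^1$-representations with nonzero integer weights $w_1(p),\dots,w_n(p)$; let $N_i$ denote the number of fixed points at which exactly $i$ of these weights are negative, so that $\sum_{i=0}^n N_i = k$. The first task is to constrain the $N_i$ as tightly as possible. Applying the Atiyah--Bott--Singer localization formula to the equivariant Hirzebruch $\chi_y$-genus and invoking its rigidity, one computes the limits $t\to 0$ and $t\to\infty$ of each fixed-point contribution and finds
\[ \chi_y(M)=\sum_{i=0}^{n}(-1)^i N_i\,y^i . \]
Thus the $N_i$ are genuine global invariants: nonnegative integers, symmetric under $N_i=N_{n-i}$, and in the Hamiltonian case equal to the even Betti numbers $b_{2i}$.

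In the symplectic case this already yields a linear $f$. The moment map is a perfect Morse function whose critical set is the fixed point set and whose index at $p$ is twice the number of negative weights, so $b_{2i}=N_i$. Because $[\omega]^n\neq 0$ in $H^{2n}(M)$ and $[\omega]^n=[\omega]^i\cup[\omega]^{n-i}$, the class $[\omega]^i$ is nonzero for every $0\le i\le n$, whence $b_{2i}\ge 1$ and therefore $k=\sum_i N_i\ge n+1$, i.e. $n\le k-1$. The entire difficulty is the purely almost complex case, where there is no symplectic class with nonvanishing top power, no moment map, and hence no reason for $N_i\ge 1$ to persist; individual $N_i$ can vanish, and the clean counting argument collapses. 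My first concrete step would therefore be to decide whether some weakened positivity of the $N_i$ survives for almost complex actions; if it does, a linear bound follows at once.

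Failing that, the next step is to extract information finer than the signs of the weights. I would exploit the recursive isotropy structure: for each integer $m>1$ the fixed submanifold $M^{\mathbb{Z}_m}$ of the subgroup $\mathbb{Z}_m\subset S^1$ is a compact almost complex manifold on which $S^1/\mathbb{Z}_m\cong S^1$ acts, its components being cut out by the weights divisible by $m$; the $S^1$-fixed points of $M$ distribute among these components with their local weights inherited from $M$. This sets up an induction on $\dim M$, transferring bounds on the lower-dimensional strata back to $M$. In parallel I would combine the integrality of all Chern numbers with the rigidity of the full family of equivariant elliptic genera, in the spirit of Bott--Taubes and Liu, to produce additional linear inequalities relating the $N_i$ to the actual weight values, the goal being to prevent the weight multiset from spreading over arbitrarily many dimensions while $k$ stays fixed.

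The main obstacle — and the reason the conjecture remains open — is precisely this last point. The invariants $\chi_y(M)$, the Chern numbers, and the elliptic-genus relations are all insensitive to inserting pairs of weights that cancel, so no one of them rules out a manifold with, say, two fixed points but arbitrarily large $n$. A successful argument must couple the cross-stratum compatibility of weights along the $M^{\mathbb{Z}_m}$ with rigidity in a way that forbids such cancellation and converts local data into a genuine dimension bound; I do not expect the $\chi_y$-genus alone to suffice. Testing the scheme against the known low cases $k\le 3$, with $k=3$ supplied by Theorem \ref{t12}, would be the natural proving ground before attempting the general linear estimate.
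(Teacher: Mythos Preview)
The statement you were asked to prove is not a theorem in the paper; it is Kosniowski's conjecture, recorded there explicitly as an open \emph{conjecture} with no proof given. The paper only verifies it for $k\le 3$ (via Theorem~\ref{t26} and Theorem~\ref{t212}), and notes that Kosniowski's suggested slope is $f(k)=2k$. So there is no ``paper's own proof'' to compare your attempt against.

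Your submission is, correspondingly, not a proof but a research outline, and you say so yourself (``the reason the conjecture remains open''). The outline is reasonable as far as it goes: the $\chi_y$-rigidity identity $\chi_y(M)=\sum_i(-1)^iN_iy^i$ and the symmetry $N_i=N_{n-i}$ are exactly Theorem~\ref{t21} and Lemma~\ref{l22} of the paper, and your remark that in the Hamiltonian symplectic case one gets $n\le k-1$ from $b_{2i}\ge 1$ is standard and correct. But the decisive step for the almost complex case---ruling out cancellation of weight pairs so that $n$ cannot grow while $k$ stays fixed---is precisely the missing idea, as you acknowledge. In particular, your own example of two fixed points is already settled by Theorem~\ref{t26} (forcing $n\in\{1,3\}$), so the first genuinely open test case for your strategy would be $k=4$, not $k\le 3$.

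In short: there is a genuine gap, namely that no proof exists here, and the paper does not claim one either.
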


Kosniowski conjectures further that $f(k)=2k$. Theorem \ref{t12} confirms that the conjecture is true if the number of fixed points is at most three. In general, Kosniowski's conjecture is challenging. Nevertheless, we verify the conjecture in a special case. For this, let the circle act on a compact almost complex manifold $M$ with isolated fixed points. The \emph{Chern class map} of $M$ is the map
\begin{center}
$c_1(M):M^{S^1} \longrightarrow \mathbb{Z}, p \mapsto c_1(M)(p) \in \mathbb{Z}$,
\end{center}
where $c_1(M)(p)$ is the first equivariant Chern class $c_1(M)$ at $p$. A map $f:X\longrightarrow Y$ between sets is called \emph{somewhere injective} if there exists an element $y$ in $Y$ such that $f^{-1}(\{y\})$ is the singleton. In \cite{PT}, Pelayo and Tolman prove that if the Chern class map of a symplectic $S^1$-action on a compact symplectic manifold $M$ is somewhere injective, then the action has at least $\frac{1}{2}\dim M+1$ fixed points. The result naturally extends to almost complex $S^1$-manifolds.

\begin{theorem} \label{t27} Let the circle act on a compact almost complex manifold $M$. If the Chern class map is somewhere injective, then the action has at least $\frac{1}{2}\dim M+1$ fixed points. \end{theorem}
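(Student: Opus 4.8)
The plan is to prove Theorem \ref{t27} by combining the Atiyah--Bott--Berline--Vergne localization formula, applied to powers of the equivariant first Chern class, with a Vandermonde-matrix argument; no symplectic hypothesis enters, only the standing assumption that the $S^1$-action preserves the almost complex structure, so that $TM$ is a complex equivariant bundle and the equivariant class $c_1^{S^1}(TM) \in H^2_{S^1}(M)$ is defined. First I would reduce to the case where $M^{S^1}$ is finite: if some fixed component is positive-dimensional, then $M$ has infinitely many fixed points and there is nothing to prove. So assume $\dim M = 2n$ with $n \ge 1$ and that all fixed points are isolated. For a fixed point $p$ set $e_p = \prod_{i=1}^{n} w_p^i \in \mathbb{Z}\setminus\{0\}$; the Chern class map sends $p$ to the integer $\Psi(p) = \sum_{i=1}^n w_p^i = c_1(M)(p)$, which is precisely the restriction of $c_1^{S^1}(TM)$ to $p$, written as $\Psi(p)\,u$ with $u$ a generator of $H^2(BS^1)$.

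The first step is to record, for every $j$ with $0 \le j \le n-1$, the identity
\begin{center}
$\displaystyle \sum_{p \in M^{S^1}} \frac{\Psi(p)^{\,j}}{e_p} = 0$.
\end{center}
This holds because $(c_1^{S^1}(TM))^{j}$ has degree $2j < 2n = \dim M$, so its equivariant pushforward lands in $H^{2j-2n}(BS^1) = 0$ and hence vanishes; evaluating that pushforward by localization, using $c_1^{S^1}(TM)|_p = \Psi(p)\,u$ and $e_{S^1}(T_pM) = u^{n} e_p$, gives exactly $u^{\,j-n}\sum_{p} \Psi(p)^j / e_p = 0$. (For $j=0$ this is also what one reads off from the leading term of the Laurent expansion at $t=1$ of the $i=0$ equation of Theorem \ref{t21}; the higher cases can be obtained similarly by combining the equations of Theorem \ref{t21}, but it is cleaner to invoke localization directly.)

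The second step is the counting argument. Assume, for contradiction, that $M$ has only $\ell \le n$ fixed points. Let $c_1,\dots,c_m$ be the distinct values attained by $\Psi$ on $M^{S^1}$, and set $A_r = \sum_{\Psi(p) = c_r} 1/e_p \in \mathbb{Q}$; then $m \le \ell \le n$, and the identities above become $\sum_{r=1}^{m} c_r^{\,j} A_r = 0$ for $j = 0,\dots,n-1$, in particular for $j = 0,\dots,m-1$. The coefficient matrix $(c_r^{\,j})_{0 \le j \le m-1,\ 1 \le r \le m}$ is a Vandermonde matrix in the pairwise distinct integers $c_r$, hence invertible, so $A_r = 0$ for every $r$. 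Now let $p_0$ be a fixed point at which the Chern class map is injective, say $\Psi(p_0) = c_{r_0}$; then $p_0$ is the only fixed point with $\Psi$-value $c_{r_0}$, so $A_{r_0} = 1/e_{p_0}$, which is nonzero because $e_{p_0}$ is a nonzero integer. This contradicts $A_{r_0} = 0$, so $M$ must have at least $n+1 = \frac{1}{2}\dim M + 1$ fixed points.

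The one step I expect to require care is the passage from the hypothesis to the contradiction: somewhere-injectivity singles out just one value of $\Psi$ rather than separating all the fixed points, so one cannot apply Vandermonde to the fixed points themselves. Grouping them first by their Chern value, and using that the distinguished value forms a one-element group whose weighted sum $1/e_{p_0}$ therefore cannot cancel against anything, is what makes the argument go through. The remaining ingredients are routine: the vanishing of $\int_M(c_1^{S^1})^j$ for $j < n$ is a pure degree count, and the localization formula used is the standard one, valid for any circle action on a compact almost complex (indeed, oriented) manifold.
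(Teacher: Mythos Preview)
Your argument is correct and is precisely the proof from \cite{PT} (ABBV localization applied to the powers $(c_1^{S^1})^j$ for $0 \le j \le n-1$, followed by grouping fixed points by their Chern value and applying a Vandermonde argument), carried over verbatim to the almost complex setting. The paper itself gives no independent proof of Theorem~\ref{t27}: it simply states that Theorem~1 of \cite{PT} extends without change, which is exactly what you have written out in detail.
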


In \cite{L2}, Li makes a conjecture concerning circle actions on orientable manifolds with three fixed points, and the statement is as follows:

\begin{conj} \cite{L2} \label{c1}
A closed, smooth and orientable $S^1$-manifold $M$ with $\dim M \neq 4$ cannot have exactly three fixed points. \end{conj}

Given a closed orientable $S^1$-manifold $M$ with only isolated fixed points, the total number of weights over all the fixed points must be even. In particular, this implies that if there is an odd number of fixed points, then $\dim M$ must be a multiple of 4; for a proof, see Corollary 2.7 of \cite{J4}. Therefore, Conjecture \ref{c1} is obvious if $\dim M$ is not a multiple of 4. 

Theorem \ref{t12} proves Conjecture \ref{c1} in almost complex case, i.e., when the orientable manifold $M$ admits an almost complex structure and the circle action preserves the almost complex structure. In the general case, to the author's knowledge, the answer to the conjecture is unknown.

Finally, the author would like to thank the anonymous referee for valuable comments.

\section{Properties and Classification}

To classify circle actions on almost complex manifolds with few fixed points, we shall introduce properties that every almost complex $S^1$-manifold with only isolated fixed points must satisfy. To prove Theorem \ref{t12}, we need the following basic facts for a circle action on an almost complex manifold.

\begin{enumerate}[(1)]
\item At each isolated fixed point $p$, there are well-defined non-zero integers $w_p^i$, called \emph{weights}, for $1 \leq i \leq n$, where $\dim M=2n$.

\item Let $k$ be an integer such that $k>1$. As a subgroup of $S^1$, $\mathbb{Z}_k$ also acts on $M$. The set $M^{\mathbb{Z}_k}$ of points fixed by the $\mathbb{Z}_k$-action is a union of smaller dimensional almost complex submanifolds, called isotropy submanifolds\footnote{Here we assume that the action is effective.}. (In the symplectic case, this is a union of smaller dimensional symplectic submanifolds.)
\end{enumerate}

A symplectic circle action on a symplectic manifold is a particular case of a circle action on an almost complex manifold. If the circle acts symplectically on a symplectic manifold, the set of almost complex structures compatible with the symplectic form is contractible. Therefore, the weights at each isolated fixed point are well-defined.

Let $M$ be a compact almost complex manifold. The Hirzebruch $\chi_y$-genus $\chi_y(M)$ of $M$ is the genus belonging to the power series $\frac{x(1+ye^{-x(1+y)})}{1+e^{-x(1+y)}}$. Suppose that the circle acts on $M$ with isolated fixed points. For any $t \in S^1$, we can define an equivariant index of the Dolbeault-type operator. In \cite{L}, Li proves that the equivariant index of the operator is rigid under the circle action; as a consequence Li obtains the following formulae.

\begin{theorem} \cite{L} \label{t21}
Let the circle act on a $2n$-dimensional compact almost complex manifold $M$ with isolated fixed points. For each $i$ such that $0 \leq i \leq n$,
\begin{center}
$\displaystyle \chi^i(M)=\sum_{p \in M^{S^1}} \frac{\sigma_i (t^{w_p^1}, \cdots, t^{w_p^n})}{\prod_{j=1}^n (1-t^{w_p^j})} = (-1)^i N^i = (-1)^i N^{n-i}$,
\end{center}
where $\chi_y(M)=\sum_{i=0}^n \chi^i(M) \cdot y^i$ is the Hirzebruch $\chi_y$-genus of $M$, $t$ is an indeterminate, $\sigma_i$ is the $i$-th elementary symmetric polynomial in $n$ variables, and $N^i$ is the number of fixed points with exactly $i$ negative weights.
\end{theorem}

Given a compact almost complex $S^1$-manifold with isolated fixed points, the weights at the fixed points satisfy various properties.

\begin{lem} \label{l22} \cite{H}, \cite{L}
Let the circle act on a $2n$-dimensional compact almost complex manifold $M$ with isolated fixed points. Then $N^i=N^{n-i}$ for all $i$, where $N^i$ is the number of fixed points with exactly $i$ negative weights. \end{lem}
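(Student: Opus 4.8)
The plan is to deduce Lemma \ref{l22} directly from Theorem \ref{t21}, since the statement $N^i = N^{n-i}$ is precisely the second equality appearing in the displayed identity of that theorem. Concretely, I would fix $i$ with $0 \le i \le n$ and evaluate the rational-function identity
\begin{center}
$\displaystyle \sum_{p \in M^{S^1}} \frac{\sigma_i (t^{w_p^1}, \cdots, t^{w_p^n})}{\prod_{j=1}^n (1-t^{w_p^j})} = (-1)^i N^i = (-1)^i N^{n-i}.$
\end{center}
Reading off the last equality $(-1)^i N^i = (-1)^i N^{n-i}$ and cancelling the sign gives $N^i = N^{n-i}$ immediately, so in the presence of Theorem \ref{t21} there is essentially nothing left to prove. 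For completeness, I would also recall the independent route through Hattori \cite{H}: one combines the vanishing of certain twisted $\chi_y$-type Dolbeault indices for a circle action with isolated fixed points with the Atiyah--Bott--Berline--Vergne localization formula, which expresses these indices as sums of local contributions indexed by the fixed points, the sign of each contribution being governed by the number of negative weights; comparing the contributions of the trivial character then forces the counting identity.

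If instead one wanted a self-contained argument from Theorem \ref{t21} without invoking its proof, the key step would be to make the substitution $t \mapsto t^{-1}$ in the identity and compare. Under $t \mapsto t^{-1}$, a fixed point $p$ with weights $w_p^1, \dots, w_p^n$ behaves like a fixed point with weights $-w_p^1, \dots, -w_p^n$, so the number of negative weights at $p$ becomes $n$ minus the original number; thus the substitution exchanges the roles of $N^i$ and $N^{n-i}$ on the right-hand side. One then checks that $\sigma_i(t^{-w_p^1}, \dots, t^{-w_p^n}) / \prod_j (1 - t^{-w_p^j})$ equals, up to a common factor of $(-1)^n t^{-(w_p^1 + \cdots + w_p^n)}$ times $\sigma_{n-i}(t^{w_p^1},\dots,t^{w_p^n})/\prod_j(1-t^{w_p^j})$, so the two identities are consistent precisely when $N^i = N^{n-i}$. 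This bookkeeping with elementary symmetric polynomials under inversion of the variables is the only mildly technical point, and it is routine.

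The main obstacle here is not mathematical but expository: one must make sure that the hypotheses match (compactness, isolated fixed points, the action preserving the almost complex structure) and that Theorem \ref{t21}, as extended to the almost complex setting by Li \cite{L}, is genuinely available, so that citing \cite{H} and \cite{L} is legitimate. Given that Theorem \ref{t21} is already stated in the excerpt for $2n$-dimensional compact almost complex $S^1$-manifolds with isolated fixed points, Lemma \ref{l22} follows as an immediate corollary, and I expect the proof in the paper to be a one-line deduction of exactly this kind, with the references to \cite{H} and \cite{L} supplied for the readers who prefer the localization-theoretic derivation.
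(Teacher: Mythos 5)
Your proposal is correct and matches the paper exactly: the paper gives no separate proof of Lemma \ref{l22}, observing only that it is precisely the second equality $N^i = N^{n-i}$ in Theorem \ref{t21} and citing \cite{H} and \cite{L} for the localization-theoretic derivations, which is your primary route. (The alternative $t \mapsto t^{-1}$ bookkeeping you sketch also works, though the common factor relating the local terms is just $(-1)^n$, since the $t^{-(w_p^1+\cdots+w_p^n)}$ factors from $\sigma_i$ and from the denominator cancel.)
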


The following is an immediate consequence of Lemma \ref{l22}.

\begin{cor} \label{c25} 
Let the circle act on a $2n$-dimensional compact almost complex manifold with $k$ fixed points. If $k$ is odd, then $n$ is even. \end{cor}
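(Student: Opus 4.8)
The plan is to deduce Corollary \ref{c25} directly from Lemma \ref{l23} by a parity argument, without using any of the deeper machinery. The key observation is that the total count of weights over all fixed points, counted with sign of the weight, must balance, and this forces a congruence on $nk$ modulo $2$.

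\textbf{Approach.} Let $M^{S^1}=\{p_1,\dots,p_k\}$ and write $\dim M=2n$, so each fixed point contributes exactly $n$ weights. Consider the total number of weights equal to $l$ summed over all fixed points, $a_l:=\sum_{p\in M^{S^1}} N_p(l)$, for each nonzero integer $l$. Since every weight is a nonzero integer and each fixed point has $n$ weights, we have $\sum_{l\neq 0} a_l = nk$. Now group the terms into pairs $\{l,-l\}$ for $l>0$: Lemma \ref{l23} gives $a_l=a_{-l}$ for every $l$, so $a_l+a_{-l}=2a_l$ is even for each $l>0$. Hence $nk=\sum_{l>0}(a_l+a_{-l})$ is a sum of even numbers, so $nk$ is even.

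\textbf{Conclusion of the proof.} If $k$ is odd, then from $nk$ even we conclude $n$ is even, which is exactly the statement. The main (and only) step requiring care is the bookkeeping: one must be sure that "weight" is being counted with multiplicity consistently on both sides, i.e. that $N_p(l)$ counts the multiplicity of $l$ among the $n$ weights at $p$ (this is the definition given just before Lemma \ref{l23}), so that $\sum_{l}N_p(l)=n$ for each $p$ and hence $\sum_{l}a_l=nk$. Once that is pinned down, the argument is immediate; there is no real obstacle, since the pairing $l\leftrightarrow -l$ together with $a_l=a_{-l}$ trivially makes the total even. One could alternatively phrase the same argument via Theorem \ref{t24}: $\sum_p c_1(M)(p)=\sum_p\sum_i w_p^i=0$, and since $\sum_p\sum_i w_p^i\equiv\sum_p\sum_i |w_p^i|\equiv nk\pmod 2$, we again get $nk$ even; but the direct pairing from Lemma \ref{l23} is cleaner and self-contained.
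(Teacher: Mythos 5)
Your main argument is correct and is essentially the paper's own route: the paper derives Corollary \ref{c25} as an immediate consequence of Lemma \ref{l23}, and your pairing of $a_l$ with $a_{-l}$ to get $nk=2\sum_{l>0}a_l$ is exactly how that implication is made explicit. One caveat: the parenthetical alternative via Theorem \ref{t24} does not work as stated, since the congruence $\sum_p\sum_i |w_p^i|\equiv nk \pmod 2$ fails whenever some weight is even (e.g.\ weights $1,1,-2$ have vanishing sum but odd count), so the vanishing of $\sum_p c_1(M)(p)$ alone does not force $nk$ to be even; keep the direct Lemma \ref{l23} pairing, which is the correct and self-contained argument.
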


\begin{lem} \label{l23} \cite{H}, \cite{L}
Let the circle act on a compact almost complex manifold $M$ with isolated fixed points. For each $w \in \mathbb{Z}$,
\begin{center}
$\displaystyle \sum_{p \in M^{S^{1}}} N_{p}(w)=\sum_{p \in M^{S^{1}}} N_{p}(-w)$,
\end{center}
where $N_{p}(w)$ is the multiplicity of $w$ in the isotropy representation $T_{p} M$ for all $p \in M^{S^{1}}$.
\end{lem}

At each fixed point $p$, the first equivariant Chern class $c_1(M)$ at $p$ is equal to the sum of the weights at $p$ (times a generator of $H_{S^1}^*(\{\textrm{pt}\})$. Therefore, the following lemma is an immediate consequence of Lemma \ref{l23}.

\begin{theorem} \label{t24} \cite{H} If the circle acts on a compact almost complex manifold $M$ with isolated fixed points, then $\sum_{p \in M^{S^1}} c_1(M)(p)=0$, where $c_1(M)(p)$ is the first equivariant Chern class of $M$ at $p$. \end{theorem}

In \cite{T}, Tolman proves a property that the weights at the fixed points which lie in the same isotropy submanifold of a symplectic circle action satisfy. It naturally extends to an almost complex $S^1$-manifold and the statement appears in \cite{GS}.

\begin{lem} \label{l24} \cite{T}, \cite{GS}
Let the circle act on a compact almost complex manifold $M$. Let $p$ and $p'$ be fixed points which lie in the same component $N$ of $M^{\mathbb{Z}_{k}}$, for some $k>1$. Then the $S^{1}$-weights at $p$ and at $p'$ are equal modulo $k$. \end{lem}

In what follows we classify an almost complex $S^1$-manifold with one or two fixed points. In \cite{PT}, Pelayo and Tolman classifies a symplectic circle action on a compact symplectic manifold with two fixed points. Earlier than this, Kosniowski classifies a holomorphic vector field on a compact complex manifold with two simple isolated zeroes \cite{K}. Moreover, the eigenvalues at the two zeroes are classified. Kosniowski's idea is to utilize the index formula for holomorphic vector fields that is analogous to one in Theorem \ref{t21}. Since neither \cite{PT} nor \cite{K} are in the context of almost complex manifolds, we provide a separate proof. Closely following Kosniowski's idea, we utilize the index formula in Theorem \ref{t21} to classify a circle action on a compact almost complex manifold with one or two fixed points. First, we need the following technical lemma.

\begin{lemma} \cite{J2} \label{l25}
Let the circle act on a compact almost complex manifold $M$ with isolated fixed points such that $\dim M>0$. Then there exists $i$ such that $N^i \neq 0$ and $N^{i+1} \neq 0$, where $N^i$ is the number of fixed points with exactly $i$ negative weights. \end{lemma}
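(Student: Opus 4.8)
\medskip

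The plan is to argue by contradiction using the rigidity identities of Theorem \ref{t21}. Suppose that there is no index $i$ with both $N^i \neq 0$ and $N^{i+1}\neq 0$. Since $\dim M > 0$ and $M$ is compact, the fixed point set is non-empty, so some $N^i$ is non-zero; among all such, pick the smallest index $i_0$ with $N^{i_0}\neq 0$. By assumption $N^{i_0+1}=0$, and in fact the set $S=\{\, i : N^i \neq 0\,\}$ must be an ``isolated'' set of indices, no two consecutive. The key point I would extract from this is that there is a ``gap'': we can split $\{0,1,\dots,n\}$ into a lower block $L$ and an upper block $U$ with $L\cup U \supseteq S$, $\max L < \min U$, $\min U \geq \max L + 2$, and all fixed points having either at most $\max L$ negative weights or at least $\min U$ negative weights. (The simplest incarnation: let $j$ be any index with $N^j\neq 0$, so $N^{j-1}=N^{j+1}=0$ if those indices are in range.)

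\medskip

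The main tool is the Atiyah--Bott--Berline--Vergne type identity of Theorem \ref{t21}: for each $i$ with $0\le i\le n$,
\[
\sum_{p \in M^{S^1}} \frac{\sigma_i(t^{w_p^1},\dots,t^{w_p^n})}{\prod_{j=1}^n (1-t^{w_p^j})} = (-1)^i N^i .
\]
At a fixed point $p$ with exactly $n_p$ negative weights, one rewrites the local contribution by factoring $(1-t^{w})^{-1} = -t^{-w}(1-t^{-w})^{-1}$ for each negative weight $w$; doing this for all $n_p$ negative weights shows that the Laurent series expansion of $p$'s contribution, as $t\to 0$, ``starts'' at an order controlled by $n_p$, and as $t\to\infty$ at an order controlled by $n-n_p$. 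Concretely, Kosniowski's idea (see \cite{K2}, \cite{K}) is to consider, for a suitable choice of $i$, the behaviour of the generating function as $t\to 0$ (or as $t \to \infty$): only fixed points with exactly $i$ negative weights can contribute a nonzero constant term, so if $N^i \ne 0$ but $N^{i\pm 1}=0$, one gets an inconsistency between the $t\to 0$ and $t\to\infty$ asymptotics of $\sum_p \sigma_i(t^{w_p^1},\dots,t^{w_p^n})\big/\prod_j(1-t^{w_p^j})$ versus the value $(-1)^i N^i$ predicted by the identity. I would make this precise by tracking, for the specific isolated index $j$ with $N^j \ne 0$, the lowest-order term in the Laurent expansion at $t=0$: each fixed point with $n_p < j$ negative weights contributes a pole or a term of negative order, each with $n_p > j$ contributes a term of positive order, and those with $n_p = j$ contribute a nonzero constant; since by assumption no fixed point has $n_p = j\pm1$ negative weights there is no cancellation mechanism available, and comparing with the analogous expansion at $t=\infty$ (using $N^j = N^{n-j}$ from Lemma \ref{l22}) forces a contradiction.

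\medskip

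The hard part, and the step that needs care, is the bookkeeping of orders of vanishing: making rigorous the claim that the absence of consecutive nonzero $N^i$'s genuinely obstructs the cancellations that the identity of Theorem \ref{t21} would otherwise require. One must choose the right index $i$ (or the right linear combination of the identities for several $i$) and the right limit ($t\to 0$ versus $t\to \infty$, or $t\to 1$), and verify that the leading coefficients are honestly nonzero rather than accidentally cancelling across fixed points in the same block $L$ or $U$. Since the statement and the essential argument are attributed to \cite{J2} (and the idea to Kosniowski), I would follow that treatment: reduce to the rigidity identities, perform the Laurent expansion at $t=0$ and $t=\infty$, and obtain the contradiction from the gap in $\{i : N^i\neq 0\}$.
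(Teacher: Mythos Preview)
The paper does not prove this lemma itself; it cites \cite{J2} (attributing the idea to Kosniowski) and adds that it ``is also easily derived from Proposition~1.2 in \cite{JT} by applying it to the smallest positive weight.'' Your strategy---expand the identities of Theorem~\ref{t21} at $t=0$---is indeed the Kosniowski route, but the sketch contains a concrete error and lacks precisely the ingredient the paper singles out. Your assertion that a fixed point with $n_p<j$ negative weights contributes ``a pole or a term of negative order'' to the $\sigma_j$-identity is false: the order at $t=0$ of the local term is $A_p+\min_{|I|=j}\sum_{i\in I}w_p^i$ (with $A_p=\sum_{w_p^i<0}|w_p^i|$), and this is always $\ge 0$, with equality exactly when $n_p=j$. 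Hence the constant term on the left is automatically $(-1)^jN^j$, and comparing constants yields no contradiction whatsoever, regardless of whether $N^{j\pm1}$ vanish.

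What actually produces the contradiction is the coefficient of $t^a$, where $a$ is the \emph{smallest} absolute value of any weight at any fixed point---exactly the paper's hint. For $p$ with $n_p=j$, the $t^a$-coefficient of its term is $(-1)^j\bigl(N_p(a)+N_p(-a)\bigr)$; fixed points with $|n_p-j|\ge2$ contribute only at order $\ge 2a$; and those with $n_p=j\pm1$ would contribute at order $\ge a$ but do not exist under your contradiction hypothesis. Thus for every $j$ with $N^j\neq0$ one obtains $\sum_{n_p=j}\bigl(N_p(a)+N_p(-a)\bigr)=0$, so no fixed point carries $\pm a$ as a weight, contradicting the choice of $a$. (A minor side point: ``$\dim M>0$ and $M$ compact'' does not by itself force the fixed set to be nonempty---e.g.\ $S^1$ acting freely on a torus---so the hypothesis must be read as including nonemptiness.)
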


We can derive Lemma \ref{l25} from Proposition 1.2 in \cite{JT} by applying it to the smallest positive weight. With Theorem \ref{t21} and Lemma \ref{l25}, we classify an almost complex $S^1$-manifold with one or two fixed points.

\begin{theorem} \label{t26}
Let the circle act on a $2n$-dimensional compact, connected almost complex manifold $M$. Then there cannot be exactly one fixed point, unless $M$ is a point. If there are exactly two fixed points, then either $\dim M=2$ or $\dim M=6$. If $\dim M=2$, then the weights at the fixed points are $\{a\}$ and $\{-a\}$ for some positive integer $a$. If $\dim M=6$, then the weights at the fixed points are $\{-a-b,a,b\}$ and $\{-a,-b,a+b\}$ for some positive integers $a$ and $b$.
\end{theorem}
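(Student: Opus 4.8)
The plan is to exploit the Atiyah--Bott--Berline--Vergne type identities of Theorem~\ref{t21} together with the combinatorial constraint of Lemma~\ref{l25}. I will treat the two cases separately, but the engine is the same: count fixed points by number of negative weights via the numbers $N^i$, and then pin down the actual weights using the equivariant index formula for small $i$.

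First, suppose $M$ has exactly one fixed point $p$ and $\dim M = 2n > 0$. Since $\sum_i N^i = 1$, exactly one $N^i$ equals $1$ and all others vanish; but Lemma~\ref{l25} requires two consecutive $N^i$ to be nonzero, which is impossible when the total is $1$. Hence $\dim M = 0$ and $M$ is a point. For the two–fixed–point case, $\sum_i N^i = 2$, and Lemma~\ref{l25} forces the two fixed points to split as $N^i = N^{i+1} = 1$ for a unique $i$, with all other $N^j = 0$. Combined with the symmetry $N^i = N^{n-i}$ of Lemma~\ref{l22}, the only possibilities are $i=0$ (so $n=1$) or, when $n$ is even, $i = n/2 - 1$ together with $n/2$; but the symmetry $N^{n/2-1} = N^{n/2+1}$ forces $n/2+1 = n/2$ unless we are in the degenerate range, so in fact $i = n/2 - 1$ can only be consistent with $i+1 = n - i$, i.e. $n = 2i+1$ odd — contradiction unless the two indices coincide with $0$ and $n$. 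Working this out carefully, the surviving options are $n = 1$, or $n$ such that $\{i, i+1\} = \{0, n\}$ is impossible for $n>1$, or the two nonzero values sit symmetrically about $n/2$, which for two points means $\{i,i+1\}$ is symmetric, forcing $n = 2i+1$; then $N^i = N^{i+1} = 1$ and all else zero. So either $n=1$, or $n = 2i+1$ is odd. One then shows $i = 1$, i.e. $n = 3$, by applying Theorem~\ref{t21} with the index $i=0$: the equation $\sum_p \prod_j (1-t^{w_p^j})^{-1} = N^0$ (a constant) combined with the analogous identity for $\sigma_1$ and the vanishing $\sum_p c_1(M)(p) = 0$ from Theorem~\ref{t24} constrains the weights so tightly that $i\ge 2$ is excluded.

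To extract the explicit weights, I would use Theorem~\ref{t21} directly. In dimension $2$ ($n=1$), the fixed point with no negative weight has weight $\{a\}$ and the one with one negative weight has weight $\{-c\}$ for positive $a,c$; the $i=0$ identity reads $\tfrac{1}{1-t^a} + \tfrac{1}{1-t^{-c}} = N^0 = 1$, and clearing denominators forces $a = c$. In dimension $6$ ($n=3$), the fixed point $p$ with no negative weights has weights $\{x,y,z\}$ with $x,y,z>0$ and the fixed point $q$ with one negative weight has weights $\{-u,v,w\}$ with $u,v,w>0$ (here I am using that $N^1 = N^2 = 1$ and $N^0 = N^3 = 0$, which follows from the analysis above applied with $n=3$, so actually the two points have one and two negative weights respectively — I would set $p$ with one negative weight, weights $\{-a, b, c\}$, and $q$ with two negative weights, weights $\{-d, -e, g\}$, all letters positive). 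Plugging into the $i=0$ identity of Theorem~\ref{t21}, clearing the denominators $\prod(1-t^{w})$, and comparing the resulting polynomial identity in $t$ — in particular matching the exponents appearing — forces the multiset relations; together with $\sum_p c_1(M)(p) = 0$ (Theorem~\ref{t24}), which here says $(-a+b+c) + (-d-e+g) = 0$, and Lemma~\ref{l23} on multiplicities of each weight $l$, one deduces that the weights must be $\{-a-b, a, b\}$ and $\{-a, -b, a+b\}$. I expect the main obstacle to be the bookkeeping in this last step: showing that no exotic solution to the polynomial identity survives beyond the claimed one, and in particular ruling out $n = 5, 7, \dots$ cleanly. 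The cleanest route for the dimension bound is likely to follow Kosniowski's original argument for holomorphic vector fields and adapt it verbatim using Theorem~\ref{t21} in place of the holomorphic Lefschetz fixed point formula, since Theorem~\ref{t21} is exactly the almost complex analogue that makes that argument go through.
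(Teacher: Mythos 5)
Your overall strategy is the same as the paper's: Lemma \ref{l25} kills the one-fixed-point case and forces $N^i=N^{i+1}=1$ for two fixed points, the symmetry $N^i=N^{n-i}$ (Lemma \ref{l22} / Theorem \ref{t21}) forces $n=2i+1$ odd, and Theorem \ref{t21} is then used to pin down the weights. However, there is a genuine gap at the decisive step, the dimension bound: your sentence that the $\sigma_0$ and $\sigma_1$ identities together with $\sum_p c_1(M)(p)=0$ ``constrain the weights so tightly that $i\ge 2$ is excluded'' is an assertion, not an argument, and you yourself flag ruling out $n=5,7,\dots$ as the main unresolved obstacle (deferring to ``adapt Kosniowski verbatim''). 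The actual argument is short but requires a specific observation you did not make: if $n>3$ then $N^0=N^1=0$, so the $i=0$ identity of Theorem \ref{t21} gives $\prod_j(1-t^{w_q^j})=-\prod_j(1-t^{w_p^j})$; substituting this into the $i=1$ identity yields $\sum_j t^{w_p^j}=\sum_j t^{w_q^j}$, i.e.\ the weight multisets at $p$ and $q$ coincide, which is impossible because $p$ and $q$ have $i$ and $i+1$ negative weights respectively. Note also that $\sum_p c_1(M)(p)=0$ (Theorem \ref{t24}) cannot do this work: it is a consequence of Lemma \ref{l23} and is satisfied by the candidate weight configurations in every odd $n$, so it is not the constraint that excludes $n\ge 5$.

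Two smaller points. Your index bookkeeping for ``$n=2i+1$'' is garbled (the clean statement is that $N^{i}=N^{n-i}$ and $N^{i+1}=N^{n-i-1}$ force $\{n-i-1,n-i\}=\{i,i+1\}$, hence $n=2i+1$), though it lands on the right conclusion. And in the $n=3$ case your plan (plug into the $i=0$ identity and compare exponents) is indeed what the paper does, but you leave it as a sketch; the execution is: writing the weights at $p$ as $\{-a_1,a_2,a_3\}$ and at $q$ as $\{-b_1,-b_2,b_3\}$, the $N^0=0$ identity first forces $a_1=b_1+b_2$ and then the polynomial identity $(1-t^{b_1+b_2})(1-t^{a_2})(1-t^{a_3})=(1-t^{b_1})(1-t^{b_2})(1-t^{b_3})$, whence $\{a_2,a_3\}=\{b_1,b_2\}$ and $b_3=a_2+a_3$. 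Your dimension-two argument via the $i=0$ identity is fine (the paper instead quotes Lemma \ref{l23} there).
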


\begin{proof} 
First, suppose that there is exactly one fixed point. In this case, there are many proofs that $M$ must be the point itself. For instance, this follows from Theorem \ref{t21}; for any $i$, $\chi^i(M)$ must be a constant. However, if there is exactly one fixed point $p$, the expression $\frac{\sigma_i (t^{w_p^1}, \cdots, t^{w_p^n})}{\prod_{j=1}^n (1-t^{w_p^j})}$ cannot be constant for all $t \in S^1$. Moreover, Lemma \ref{l25} also gives the same conclusion.

Next, suppose that there are exactly two fixed points. Label the fixed points by $p$ and $q$. By Lemma \ref{l25}, there exists $i$ such that if one fixed point has exactly $i$ negative weights, then the other fixed point has exactly $i+1$ negative weights. Without loss of generality, assume that $p$ has exactly $i$ negative weights and hence $q$ has exactly $i+1$ negative weights. Hence $N^i=N^{i+1}=1$ and $N^j=0$ for $j \neq i, i+1$. By Theorem \ref{t21}, $N^i=N^{i+1}=N^{n-i}=1$. Therefore, $n=2i+1$ is odd and $i=\frac{1}{2}(n-1)$. 

Suppose that $n>3$. Then $N^0=N^1=0$ and hence by Theorem \ref{t21} we have
\begin{center}
$\displaystyle 0=N^0=\frac{1}{\prod_{j=1}^n (1-t^{w_p^j})}+\frac{1}{\prod_{j=1}^n (1-t^{w_q^j})}$

$\displaystyle 0=-N^1=\frac{\sum_{j=1}^n t^{w_p^j}}{\prod_{j=1}^n (1-t^{w_p^j})}+\frac{\sum_{j=1}^n t^{w_q^j}}{\prod_{j=1}^n (1-t^{w_q^j})}$.
\end{center}
This implies that $w_{p_1}^j$ and $w_{p_2}^j$ agree up to order, which is a contradiction since $p$ and $q$ have different numbers of negative weights (and hence different numbers of positive weights). Therefore, either $n=1$ or $n=3$.

If $n=1$, by Lemma \ref{l23}, there exists a positive integer $a$ so that the weight at $p$ is $a$ and the weights at $q$ is $-a$.

Suppose that $n=3$. Assume that $p$ has weights $-a_1,a_2,a_3$ and $q$ has weights $-b_1,-b_2,b_3$, where $a_i,b_i$ are positive integers. By Theorem \ref{t21}, we have

\begin{center}
$\displaystyle 0=N^0=\frac{1}{(1-t^{-a_1})(1-t^{a_2})(1-t^{a_3})}+\frac{1}{(1-t^{-b_1})(1-t^{-b_2})(1-t^{b_3})}$
\end{center}
and hence
\begin{center}
$\displaystyle 0=N^0=\frac{-t^{a_1}}{(1-t^{a_1})(1-t^{a_2})(1-t^{a_3})}+\frac{t^{b_1+b_2}}{(1-t^{b_1})(1-t^{b_2})(1-t^{b_3})}$.
\end{center}
This implies that $a_1=b_1+b_2$. Moreover, we have
\begin{center}
$(1-t^{b_1+b_2})(1-t^{a_2})(1-t^{a_3})=(1-t^{b_1})(1-t^{b_2})(1-t^{b_3})$.
\end{center}
This implies that $\{a_2,a_3\}=\{b_1,b_2\}$ and $b_3=a_2+a_3$. Let $a_2=a, a_3=b$. The result follows. \end{proof}

With Theorem \ref{t27}, Theorem \ref{t24}, and Theorem \ref{t26}, we obtain the following corollary.

\begin{cor} \label{c28}
Let the circle act on a compact almost complex manifold $M$ with non-empty fixed point set. Then there are at least two fixed points, and if $\dim M \geq 8$, then there are at least three fixed points. Moreover, if the Chern class map is not identically zero and $\dim M \geq 6$, then there are at least four fixed points. \end{cor}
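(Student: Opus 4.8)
The plan is to assemble Corollary \ref{c28} from the three ingredients listed just before it, by reasoning about the number $k$ of fixed points together with the parity constraint of Corollary \ref{c25}. First I would dispose of the claim that a non-empty fixed point set forces at least two fixed points: if there were exactly one fixed point, Theorem \ref{t26} says $M$ must be a point, so $\dim M = 0$; since a single fixed point with no weights is excluded once $\dim M > 0$, any positive-dimensional $M$ with fixed points has $k \geq 2$.

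Next I would handle the case $\dim M \geq 8$. Suppose for contradiction that $k = 2$. By Theorem \ref{t26}, two fixed points force $\dim M = 2$ or $\dim M = 6$, both of which contradict $\dim M \geq 8$. Hence $k \geq 3$ in this range. (One should also note $k$ cannot be $0$ or $1$ here by the previous paragraph.) This step is essentially immediate once Theorem \ref{t26} is in hand.

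For the last assertion, assume the Chern class map is not identically zero and $\dim M \geq 6$, and suppose toward a contradiction that $k \leq 3$. We already know $k \neq 0, 1$. If $k = 2$, then by Theorem \ref{t26} we are in the $\dim M = 6$ case with weights $\{-a-b,a,b\}$ at one fixed point and $\{-a,-b,a+b\}$ at the other; one then computes $c_1$ at each fixed point and checks it is $0$ at both (indeed $(-a-b)+a+b = 0$ and $(-a)+(-b)+(a+b)=0$), so the Chern class map is identically zero, a contradiction. If $k = 3$, then by Theorem \ref{t12} we must have $\dim M = 4$, contradicting $\dim M \geq 6$. Therefore $k \geq 4$.

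I expect the only genuinely delicate point to be making the logical bookkeeping airtight — in particular, confirming that in the two-fixed-point $6$-dimensional case the Chern class map really is forced to vanish identically (not merely at the fixed points), which follows because the localization data of Theorem \ref{t21} determine the relevant cohomological information and the fixed-point values of $c_1$ are both zero; and making sure the hypothesis ``Chern class map is not identically zero'' is used only where needed. Everything else is a direct appeal to Theorem \ref{t12}, Theorem \ref{t26}, and Corollary \ref{c25}, with no new computation required. Note that Theorem \ref{t27} is not actually needed for this argument once Theorem \ref{t26} and Theorem \ref{t12} are available; I would mention it only if a cleaner uniform proof via somewhere-injectivity is desired.
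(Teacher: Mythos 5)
Your first two assertions (at least two fixed points; at least three when $\dim M \geq 8$) are argued exactly as the paper intends, via Theorem \ref{t26}. The problem is in the last assertion: you dispose of the three-fixed-point case by citing Theorem \ref{t12} (three fixed points $\Rightarrow \dim M = 4$), but this is circular. Corollary \ref{c28} is itself one of the ingredients in the proof of Theorem \ref{t12} (= Theorem \ref{t212}): the paper's proof sketch explicitly invokes it (``Suppose $\dim M>4$. Then Corollary \ref{c28} says that the first Chern class at each fixed point is zero''), and it is listed among the results through which ``the proof of Theorem 1.1 in \cite{J1} goes through.'' So Theorem \ref{t12} is not available when proving this corollary, and your claim that ``Theorem \ref{t27} is not actually needed'' is exactly backwards: Theorem \ref{t27} is what replaces the appeal to Theorem \ref{t12}.

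The non-circular argument, which is the one the paper (following Corollary 4 of \cite{PT}) has in mind, runs as follows for the three-fixed-point case. Suppose $\dim M \geq 6$, the Chern class map is not identically zero, and there are exactly three fixed points. By Theorem \ref{t24} the three values $c_1(M)(p)$ sum to zero; since they are not all zero they cannot all be equal, so at least one value is attained by exactly one fixed point, i.e.\ the Chern class map is somewhere injective. Theorem \ref{t27} then gives at least $\frac{1}{2}\dim M + 1 \geq 4$ fixed points, a contradiction; combined with your (correct) treatment of the two-fixed-point case via the explicit weights of Theorem \ref{t26}, this yields $k \geq 4$. A small additional remark: the ``Chern class map'' here is the map on the fixed point set $p \mapsto c_1(M)(p)$, so ``identically zero'' already means vanishing at every fixed point; your worry about vanishing ``not merely at the fixed points'' is not an issue, and no appeal to Theorem \ref{t21} is needed for that point.
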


\begin{proof} The first claim follows immediately from Theorem \ref{t26}. Next, suppose that $\dim M \geq 6$ and the Chern class map is not identically zero. Then the image of the Chern class map contains at least two elements. On the other hand, by Theorem \ref{t24}, $\sum_{p \in M^{S^1}} c_1(M)(p)=0$. Therefore, if there are two or three fixed points, then the Chern class map is somewhere injective. Since $\dim M \geq 6$, by Theorem \ref{t27}, there must be at least four fixed points. \end{proof}

With all of the above, we are ready to classify an almost complex $S^1$-manifold with three fixed points.

\begin{theorem} \label{t29} Let the circle act on a compact, connected almost complex manifold $M$. If there are exactly three fixed points, then $\dim M=4$. Moreover, the weights at the fixed points are $\{a+b,a\},\{-a,b\}$, and $\{-b,-a-b\}$ for some positive integers $a$ and $b$. \end{theorem}

For the rest of the paper, we discuss the proof of Theorem \ref{t29}. As mentioned in the introduction, the idea of the proof is to adapt the proof of Theorem \ref{t11} in \cite{J1} with eliminating the reliance on the symplectic property. The proof of Theorem \ref{t11} directly uses the following results by others:
\begin{enumerate}
\item In \cite{Ka}: Theorem 4.1.
\item In \cite{MD}: Proposition 2.
\item In \cite{PT}: Theorem 3, Corollary 4, Lemma 11, Corollary 12, and Lemma 13.
\item In \cite{T}: Lemma 2.6.
\end{enumerate}

Note that Theorem \ref{t27}, Theorem \ref{t24}, Theorem \ref{t26}, Corollary \ref{c28}, Lemma \ref{l22}, Corollary \ref{c25}, and Lemma \ref{l23} extend Theorem 1, Theorem 2, Theorem 3, Corollary 4, Lemma 11, Corollary 12, and Lemma 13 of \cite{PT} from symplectic $S^1$-actions to almost complex $S^1$-manifolds, respectively. Moreover, Lemma \ref{l24} extends Lemma 2.6 of \cite{T}.

On the other hand, to prove Theorem \ref{t29}, we do not need Theorem 4.1 of \cite{Ka} and Proposition 2 of \cite{MD}. Theorem 4.1 of \cite{Ka} classifies 4-dimensional compact Hamiltonian $S^1$-spaces by their associated multigraphs, up to equivariant symplectomorphism. Proposition 2 of \cite{MD} proves that a symplectic circle action on a 4-dimensional compact symplectic manifold is Hamiltonian if and only if there is a fixed point.

Therefore, we have extended all the results that we need to prove Theorem \ref{t29} from the symplectic case to the almost complex case.

We shall give a brief outline of the proof of Theorem \ref{t29}. To prove Theorem \ref{t29}, we use induction on the dimension of the manifold $M$. The key idea is to get restrictions on the weights at the three fixed points. We prove that if $\dim M>4$, then the weights at the fixed points cannot satisfy all the restrictions, which means that such a manifold $M$ cannot exist.

By quotienting out by the subgroup which acts trivially, we may assume that the action is effective. As mentioned at the beginning of this section, for an integer $k>1$, the subgroup $\mathbb{Z}_k$ acts on $M$ and there is an induced circle action on each component of $M^{\mathbb{Z}_k}$. By the inductive hypothesis, we only have a few possibilities for $M^{\mathbb{Z}_k}$; for details see Lemma 4.5 of \cite{J1}. This is one of the main ideas of the proof of Theorem \ref{t11}.

Let $\dim M=2n$. By Corollary \ref{c25}, $n$ is even. By using the same argument as in Proposition 3.2 of \cite{J1}, the largest weight occurs only once among all the weights at the fixed points, and so does the smallest weight. Using this, we also show that the numbers of negative weights at the fixed points are $\frac{n}{2}-1$, $\frac{n}{2}$, and $\frac{n}{2}+1$ (for details, see Lemma 3.4 of \cite{J1}; twice of the number of negative weights at a fixed point is called the \emph{index} of the fixed point in \cite{J1}.). In fact, one can prove this by combining Lemma \ref{l22} and Lemma \ref{l25}.

Suppose that $\dim M=4$. By Lemma \ref{l23}, there exist positive integers $a,b$, and $c$ such that the weights at the fixed points are $\{a,c\}, \{-a,b\}$, and $\{-b,-c\}$. As in the proof of Proposition 4.3 of \cite{J1}, from push-forward of 1 in ABBV localization theorem, we have that $c=a+b$. This proves the case where $\dim M=4$.

Suppose $\dim M>4$. Then Corollary \ref{c28} says that the first equivariant Chern class at each fixed point is zero. By considering possible submanifolds for $M^{\mathbb{Z}_2}$ with this fact and the inductive hypothesis, we show that $M^{\mathbb{Z}_2}$, the set of points fixed by the $\mathbb{Z}_2$-action, is a 4-dimensional almost complex submanifold that contains all the fixed points. Moreover, we determine the weights at the fixed points; see Lemma 4.4 of \cite{J1}.

For technical reasons, we separate into two cases; the case that the largest weight is odd (section 5 of \cite{J1}) and the case that the largest weight is even (section 6-8 of \cite{J1}). In section 5 of \cite{J1} we prove that if $\dim M>4$, then the largest weight cannot be odd. In sections 6-8 we prove that if $\dim M>4$, then the largest weight cannot be even. Therefore, we conclude that $\dim M=4$ and hence the theorem follows. No problem occurs here at all to copy down the proofs in sections 5-8 of \cite{J1} to prove Theorem \ref{t29} with changing the word symplectic manifold to almost complex manifold and the word symplectic circle action to circle action that preserves the almost complex structure.

\begin{proof}[Proof of Theorem \ref{t29}] Apply the proof of Theorem \ref{t11} in \cite{J1} with the following modifications.
\begin{enumerate}[(1)]
\item Replace a symplectic (sub) manifold with an almost complex (sub) manifold.
\item Replace a symplectic circle action with a circle action on an almost complex manifold that preserves the almost complex structure.
\item Replace Lemma 2.2, Corollary 2.3, Lemma 2.4, Lemma 2.5, Theorem 2.6, and Corollary 2.7 of \cite{J1} with Lemma \ref{l22}, Corollary \ref{c25}, Lemma \ref{l23}, Lemma \ref{l24}, Theorem \ref{t26}, and Corollary \ref{c28}.
\item In Proposition 4.3 of \cite{J1}, replace the conclusion `If $\dim M<8$, then $M$ is equivariantly symplectomorphic to $\mathbb{CP}^2$' with `If $\dim M<8$, then $\dim M=4$ and the weights at the fixed points are $\{a+b,a\}$, $\{-a,b\}$, and $\{-b,-a-b\}$ for some positive integers $a$ and $b$.'
\item Ignore the parts that use Proposition 4.1 and Theorem 4.2 in \cite{J1}.
\end{enumerate}
 \end{proof}

\begin{rem} Consider a symplectic circle action on a compact symplectic manifold $M$. As in \cite{PT}, if there are two fixed points, then either $M$ is the 2-sphere or $\dim M=6$. However, we do not know if there exists such a manifold $M$ with $\dim M=6$, and the existence or non-existence of such a manifold is an important question for the classification problem of symplectic $S^1$-actions. Note that if such a manifold exists, then the action cannot be Hamiltonian, because any compact Hamiltonian $S^1$-space $M$ has at least $\frac{1}{2}\dim M+1$ fixed points.

On the other hand, as mentioned in the introduction, there exists a circle action on a 6-dimensional compact almost complex manifold with two fixed points, that is, a rotation on the 6-sphere.

Since in the proof of Theorem 1.1 of \cite{J1} the author allows the possibility that there is a symplectic circle action on a 6-dimensional compact symplectic manifold with two fixed points (which does exist in the case of almost complex $S^1$-manifolds), no problem occurs in extending Theorem \ref{t11} to Theorem \ref{t29}. \end{rem}

\end{document}